\documentclass[11pt]{amsart}
\usepackage[a4paper,margin=29mm]{geometry}
\usepackage{amsmath,amssymb,amsthm,mathtools}
\usepackage{mathrsfs}
\usepackage{microtype}
\usepackage{enumitem}
\usepackage{hyperref}
\hypersetup{colorlinks=true,citecolor=blue,linkcolor=blue,urlcolor=blue}

\newtheorem{theorem}{Theorem}[section]
\newtheorem{proposition}[theorem]{Proposition}
\newtheorem{lemma}[theorem]{Lemma}

\theoremstyle{remark}
\newtheorem{remark}[theorem]{Remark}
\newcommand{\T}{\mathbb T}
\newcommand{\R}{\mathbb R}
\newcommand{\Z}{\mathbb Z}
\newcommand{\ii}{\mathrm i}
\newcommand{\eps}{\varepsilon}
\newcommand{\av}[1]{\langle #1\rangle}
\newcommand{\norm}[1]{\lVert #1\rVert}
\newcommand{\cD}{\mathcal D}

\newcommand{\cR}{\mathcal R}

\newcommand{\Id}{\mathrm{Id}}

\title[Reversible Duffing equations]{Lagrange Stability for Reversible Duffing Equations\\
with Quasi-Periodic Coefficients}
\author{Huining Xue}
\address{College of Artificial Intelligence, Lishui University, Lishui, Zhejiang, China}
\email {xhn\_lsxy627@lsu.edu.cn}

\begin{document}
\begin{abstract}
We consider
\[
 \ddot x+f(x,\omega t)\dot x+g(x,\omega t)=0,
\]
where
\[
 f(x,\theta)=\sum_{j=0}^{m}a_j(\theta)x^{2j+1},\qquad
 g(x,\theta)=x^{2n+1}+\sum_{j=0}^{n-1}b_j(\theta)x^{2j+1}.
\]
The coefficient functions are real analytic and even on the torus and the
frequency vector $\omega$ is Diophantine.  If $n\geq2(m+1)$, we construct
codimension-one reversible KAM tori accumulating at infinity and prove that
all solutions are bounded.  The main point is a finite normal-form procedure.
After the reversible polynomial reduction, a logarithmic Fourier cut-off is
introduced.  At the $v$-th step a truncated homological equation is solved on
a non-resonant action interval and the new error satisfies an explicit
finite-step recurrence.  Thus an arbitrarily small negative power of the large
action is reached after finitely many steps. Finally, the Largrangian stability and the existence of quasi-periodic solutions are proved by the reversible KAM theorem.
\end{abstract}

\subjclass[2020]{34C15, 34C27, 37J40}
\keywords{Reversible system, Duffing equation, quasi-periodic coefficient,
KAM torus, Lagrange stability}
\thanks{This work was supported by the research initiation project of Lishui University(Grant No.QDZK112026001).}
\maketitle

\section{Introduction}
The Lagrange-stability problem asks whether every solution of a forced
superlinear oscillator exists for all time and remains in a bounded region
of phase space.  It originates in Littlewood's problems on oscillatory
differential equations \cite{Littlewood}.  A first decisive result was
obtained by Morris \cite{Morris}, who proved boundedness for a periodically
forced cubic Duffing equation.  Dieckerhoff and Zehnder
\cite{DieckerhoffZehnder} subsequently treated general polynomial
Hamiltonian Duffing equations.  Their argument reduces the dynamics at
large energy to a small perturbation of a twist map and applies Moser's
invariant-curve theorem \cite{Moser1962}.

For time-periodic superquadratic potentials, Levi
\cite{Levi1991} developed an action--angle normalization at infinity.
The genuinely quasi-periodic Hamiltonian problem is more delicate because
there is no single forcing period and hence no ordinary Poincar\'e map.
Levi and Zehnder \cite{LeviZehnder} proved boundedness for a broad class of
quasi-periodic potentials by a near-integrability argument in the extended
phase space.  In the polynomial Duffing setting with the coefficients of low regularity, invariant tori and
boundedness results were obtained by Yuan \cite{Yuan1998}.  More recently,
Xue and Yuan \cite{XueYuan} proved the existence of positive-measure KAM
tori clustering at infinity for Duffing equations with analytic
quasi-periodic coefficients.  A central device in their proof is the
decomposition into Fourier modes below and above
\[
 K=c_*\log A,
\]
where $A$ is the large-amplitude parameter.  Low modes are normalized on
large non-resonant action intervals, whereas analyticity makes the high
modes smaller than an arbitrarily prescribed power of $A$.

The equation considered here is not Hamiltonian.  For reversible
time-periodic equations, Liu applied reversible KAM theory to boundedness
questions \cite{Liu1991}; Liu and Zanolin \cite{LiuZanolin} then established
the sharp polynomial result for
\[
 \ddot x+f(x,t)\dot x+g(x,t)=0,
\qquad n\geq2(m+1).
\]
Related invariant-curve theorems for quasi-periodic reversible mappings
were developed by Liu \cite{Liu2005}; see also the monograph of Sevryuk
\cite{Sevryuk} and the finite-smoothness result of Li, Qi and Yuan
\cite{LiQiYuan}.  These results supply the final persistence mechanism, but
they do not by themselves reduce the large, quasi-periodically dependent
perturbation arising at infinity to the required small normal form.

We study the reversible Duffing equation
\begin{equation}\label{eq:main}
 \ddot x+f(x,\omega t)\dot x+g(x,\omega t)=0,
\end{equation}
where
\begin{equation}\label{eq:fg}
 f(x,\theta)=\sum_{j=0}^{m}a_j(\theta)x^{2j+1},\qquad
 g(x,\theta)=x^{2n+1}+\sum_{j=0}^{n-1}b_j(\theta)x^{2j+1}.
\end{equation}
To the best of our knowledge, the combination of a non-Hamiltonian
reversible damping term, genuinely quasi-periodic coefficients, and the
sharp degree range has not been covered by the preceding results.

The contributions of the paper are the following.
\begin{enumerate}[label=\textup{(\roman*)}]
\item We combine the polynomial reversible reduction of
Liu--Zanolin with the logarithmic Fourier cut-off of Xue--Yuan.  The mixed
divisors
\[
 D_{k\ell}(\rho)=k c_0\rho^{n/(n+2)}
                 +\langle\ell,\omega\rangle
\]
are controlled on explicitly constructed non-resonant action intervals.
\item The large perturbation is reduced by a finite, rather than
convergent, normalization.  At the $v$-th step the new remainder obeys the
quantitative recurrence
\[
 \eps_{v+1}\leq C_vA^{-\kappa_0}\eps_v+
 B_v\eps_v^2+CK^dA^{-c_*\sigma_v}\eps_v.
\]
This gives $\eps_M\leq A^{-L}$ after an explicitly chosen finite number
$M$ of steps.
\item Every coordinate change is shown to commute with the reversing
involution.  Thus reversibility is preserved at each finite step, not only
for the final normal form.
\item The final invariant $(d+1)$-tori are codimension-one graphs in the
extended phase space.  They separate the extended cylinder and therefore
act as barriers, giving boundedness without introducing a Poincar\'e map.
\end{enumerate}

Let $\T^d=\R^d/(2\pi\Z)^d$.  If $s>0$, then
\[
 \T_s^d=\{\theta\in\mathbb C^d/(2\pi\Z)^d:
               |\operatorname{Im}\theta|<s\}.
\]
Our assumptions are:
\begin{enumerate}[label=\textup{(H\arabic*)}]
\item\label{H1}
$a_j,b_j$ extend holomorphically to $\T_{s_0}^d$, are real on $\T^d$, and
\[
 a_j(-\theta)=a_j(\theta),\qquad b_j(-\theta)=b_j(\theta);
\]
\item\label{H2}
for some $\gamma\in(0,1]$,
\begin{equation}\label{eq:DC}
 |\langle k,\omega\rangle|
 \geq\frac{\gamma}{|k|^{d+2}},
 \qquad 0\neq k\in\Z^d;
\end{equation}
\item\label{H3}
\begin{equation}\label{eq:degree}
 n\geq2(m+1).
\end{equation}
\end{enumerate}

\begin{theorem}\label{thm:main}
Under \ref{H1}--\ref{H3}, every solution of \eqref{eq:main} is defined on
$\R$ and
\[
 \sup_{t\in\R}\bigl(|x(t)|+|\dot x(t)|\bigr)<\infty.
\]
Moreover, the extended system possesses a positive-measure family of
analytic invariant $(d+1)$-tori accumulating at infinity.  The frequency
vector on each such torus is $(\omega,\Omega)$, where
$|\Omega|\to\infty$ along the family.
\end{theorem}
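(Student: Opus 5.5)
The proof will follow the scheme announced in the introduction: reduce the problem at large amplitude to a small reversible perturbation of an integrable twist system, then apply a reversible KAM theorem in the extended phase space.

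\textbf{Reformulation and reversibility.} Write \eqref{eq:main} as the first-order system $\dot x=y-F(x,\theta)$, $\dot y=-g(x,\theta)$, $\dot\theta=\omega$, with $F(x,\theta)=\int_0^x f(s,\theta)\,ds$. Here $F$ is even in $x$ and even in $\theta$, and $g$ is odd in $x$ and even in $\theta$. Using \ref{H1}, one checks that the involution $G(x,y,\theta)=(-x,y,-\theta)$ reverses the flow. In the large-amplitude region I introduce action--angle variables $(\rho,\phi)$ for the autonomous oscillator $\ddot x+x^{2n+1}=0$, with energy $\sim\rho^{(2n+2)/(n+2)}$ and frequency $c_0\rho^{n/(n+2)}$. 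The reversing involution then becomes $(\rho,\phi,\theta)\mapsto(\rho,-\phi,-\theta)$, and the equations take the form
\[
\dot\phi=c_0\rho^{n/(n+2)}+P_1,\qquad \dot\rho=P_2,\qquad \dot\theta=\omega.
\]
The size of $P_1,P_2$ relative to the twist is governed by the damping term, of order $\rho^{(2m+2)/(n+2)}\cdot\rho^{-n/(n+2)}$ in the appropriate normalization, and by the lower-order $b_j$-terms. Condition \ref{H3}, $n\ge 2(m+1)$, is exactly what makes these relative sizes a negative power (or at worst bounded, removable after the polynomial reduction) of $A$ on the annulus $\rho\sim A$. Following Liu--Zanolin, I first perform the finite polynomial reversible reduction. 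This consists of averaging in $\phi$ the terms that decay slowest, so that the remainder is $O(A^{-\kappa_0})$ relative to the twist. Each such change is generated by functions with the correct parity, so it commutes with the involution.

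\textbf{Finite normalization with Fourier cut-off.} This is the main step. Set $K=c_*\log A$ and split the perturbation into modes $(k,\ell)$ with $|k|+|\ell|\le K$ and the rest. By analyticity in $\T^d_{s}$ and in $\phi$, the high modes are $O(A^{-c_*\sigma_v})$ after shrinking the strip by $\sigma_v$. For the low modes I solve the truncated homological equation with divisors $D_{k\ell}(\rho)=kc_0\rho^{n/(n+2)}+\langle\ell,\omega\rangle$. For $k\ne0$ I remove the action intervals where $|D_{k\ell}|$ is small. Since $\rho\mapsto c_0\rho^{n/(n+2)}$ has derivative of order $A^{-2/(n+2)}$ and only $O(K^{d+1})$ pairs occur, the excluded set is a small fraction of $[A,2A]$, so a non-resonant interval of length $\gtrsim A$ survives. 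For $k=0$, \eqref{eq:DC} gives $|\langle\ell,\omega\rangle|\ge\gamma K^{-d-2}$, which is only a power of $\log A$. The new error then satisfies
\[
\eps_{v+1}\le C_vA^{-\kappa_0}\eps_v+B_v\eps_v^2+CK^dA^{-c_*\sigma_v}\eps_v.
\]
Choosing $\sigma_v=s_0/(4M)$ and $c_*$ large, after $M=M(L)$ steps I obtain $\eps_M\le A^{-L}$. Here $M$ is finite, so all constants $C_v,B_v$ are uniform and logarithmic losses are absorbed. Reversibility is preserved at every step, because the generating vector field solving the homological equation inherits the parity of the perturbation. This is the step I expect to be hardest. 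One must keep track simultaneously of the shrinking of analyticity domains in $(\phi,\theta)$ and of the action interval, of the $\log A$ divisor losses, and of the parity bookkeeping. The averaged $k=0$ terms also modify the frequency, and I must check that the twist $\partial_\rho\Omega\neq0$ survives.

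\textbf{KAM and boundedness.} On the annulus $\rho\in[A,2A]$, rescale $\rho=A+A\,r$ with $r\in[0,1]$. The system becomes a reversible perturbation of size $A^{-L+O(1)}$ of $\dot\phi=\Omega(r)$, $\dot r=0$, $\dot\theta=\omega$, with nondegenerate twist $\Omega'(r)\sim A^{n/(n+2)}$. Apply the reversible KAM theorem (Sevryuk, Liu) with frequencies $(\omega,\Omega)$. The frequency $\omega$ is kept fixed, and $\Omega$ varies over a set of positive measure satisfying a joint Diophantine condition. This yields analytic invariant $(d+1)$-tori $r=\psi(\phi,\theta)$ of frequency $(\omega,\Omega)$ with $\Omega\sim A^{n/(n+2)}\to\infty$, for a sequence $A_j\to\infty$.

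Each torus is a graph over $\T\times\T^d$, so it separates the extended cylinder $\R^2\setminus\{\text{large disc}\}\times\T^d$ into a bounded and an unbounded component. Any solution starts inside some torus, since the tori accumulate at infinity. By invariance it cannot cross that torus, so it stays bounded on its maximal interval. Hence it is global and $\sup_t(|x|+|\dot x|)<\infty$.
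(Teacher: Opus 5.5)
Your outline follows the paper's architecture closely: action--angle variables, the Liu--Zanolin reduction, the cut-off $K=c_*\log A$, the same recurrence, equivariant changes, reversible KAM, and graph tori as barriers. However, your treatment of the $\psi$-independent modes is a genuine gap.

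You feed the modes $(0,\ell)$, $\ell\neq0$, into the truncated homological equation and say the only cost is the divisor $\gamma K^{-d-2}$. The divisor is not the real issue. For $k\neq0$ one has $|D_{k\ell}|\gtrsim A^q$, so a term that is small \emph{relative to the twist} $\nu\sim A^q$ yields a generator of size $\Gamma_A\eps_v$, with $\Gamma_A\sim A^{-q}K^{5d+8}$. This gain is exactly what makes $C_vA^{-\kappa_0}\eps_v+B_v\eps_v^2$ contract. For $k=0$ there is no such gain. Worse, the coupling $-D_zN_0W$ in \eqref{eq:homological} gives $\widehat V=\widehat G/(\ii\langle\ell,\omega\rangle)+\nu'(r)\widehat F/(\ii\langle\ell,\omega\rangle)^2$, of size up to $A^qK^{2d+4}\eps_v$ in the norm \eqref{eq:weightednorm}. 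The $k=0$ part is also not small in absolute terms at all. The coefficient $b_{n-1}(\theta)x^{2n-1}$ contributes to $\dot\varphi$ the $\varphi$-average $\kappa_4\av{S^{2n}}_\varphi\, b_{n-1}(\theta)\rho^{(n-2)/(n+2)}$, which no $\varphi$-primitive of the polynomial reduction can touch. Removing its $\theta$-oscillation needs a phase shift of order $A^{(n-2)/(n+2)}$, which is unbounded for $n\geq3$. Such a shift is not near the identity and cannot be performed on a complex $\theta$-strip of fixed width. Afterwards the tail bound $K^de^{-K\sigma_v}$ with fixed losses $\sigma_v$ is lost, and that bound is the whole point of the logarithmic cut-off. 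Hence the recurrence you write does not follow once $k=0$ modes are included.

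The paper never solves for $k=0$ inside the iteration. It keeps $\av{R_v}_\psi=0$, accumulates the $\psi$-averages in $\overline N_v=(a_v,b_v)$ via \eqref{eq:newaverage}, and uses only their tameness \eqref{eq:averageTameV}; this is where the term $C_vA^{-\kappa_0}\eps_v$ comes from. Only after $\eps_M\leq A^{-L}$ is the $\theta$-dependence removed, without truncation and using the full condition \eqref{eq:DC}:
\begin{itemize}
\item first the radial shift $r=u+Q(u,\theta)$ with $\omega\cdot\partial_\theta Q=a$, solvable because $[a]=0$ by the oddness of $a$ in $\theta$;
\item then the shear $\chi=\psi-W(u,\theta)$, which need not be small since no further cut-off is required, giving $\Omega(u)=[\mathcal N](u)$, whose twist is checked afterwards.
\end{itemize}
The identity $[a]=0$ is where reversibility is indispensable: a nonzero $[a](r)$ would be an averaged radial drift incompatible with invariant graphs. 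Your remark that the averaged $k=0$ terms ``modify the frequency'' tacitly assumes it.

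Two smaller slips. First, with $y=\dot x+F(x,\omega t)$ one gets $\dot y=-g(x,\theta)+\langle\omega,\partial_\theta F(x,\theta)\rangle$, so your Li\'enard system is not equivalent to \eqref{eq:main}. The missing term is even in $x$, odd in $\theta$ and of lower order, hence harmless, but it must be included; the paper simply uses $y=\dot x$. Second, the relative size of the damping is $\rho^{(2m+1)\alpha-q}=\rho^{-(n-2m-1)/(n+2)}$, cf.\ \eqref{eq:powercount}. This is a genuine negative power exactly under \ref{H3}, so there is no borderline ``bounded'' case.
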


\begin{remark}
Condition \eqref{eq:degree}, rather than $n\geq m+1$, is the sharp degree
condition in the reversible polynomial class \eqref{eq:fg}; see
\cite{LiuZanolin}.  The evenness in \ref{H1} is also an actual assumption:
it cannot be imposed without loss of generality.
\end{remark}

\section{Action--angle coordinates}

Put $y=\dot x$ and $\theta=\omega t$.  Then \eqref{eq:main} is equivalent to
\begin{equation}\label{eq:extended}
 \dot x=y,\qquad
 \dot y=-f(x,\theta)y-g(x,\theta),\qquad
 \dot\theta=\omega.
\end{equation}
If
\[
 \cR(x,y,\theta)=(-x,y,-\theta),
\]
then
\begin{equation}\label{eq:rev}
 X(\cR z)=-D\cR(z)X(z).
\end{equation}

Let $(S(t),C(t))$ be the solution of
\begin{equation}\label{eq:SC}
 \dot S=C,\qquad \dot C=-S^{2n+1},\qquad (S(0),C(0))=(0,1),
\end{equation}
and let $T_0$ be its least period.  We have
\begin{equation}\label{eq:SCproperties}
 S(-t)=-S(t),\quad C(-t)=C(t),\quad
 (n+1)C(t)^2+S(t)^{2n+2}=n+1.
\end{equation}
Set
\begin{equation}\label{eq:ab}
 \alpha=\frac1{n+2},\qquad
 \beta=\frac{n+1}{n+2},\qquad q=2\beta-1=\frac n{n+2}.
\end{equation}
Choose $c_1,c_2>0$ so that
\begin{equation}\label{eq:AA}
 x=c_1\rho^\alpha S(T_0\varphi),\qquad
 y=c_2\rho^\beta C(T_0\varphi)
\end{equation}
has Jacobian one.  This is an analytic area-preserving diffeomorphism
$\R_+\times\T\to\R^2\setminus\{0\}$.

For $S=S(T_0\varphi)$ and $C=C(T_0\varphi)$, direct substitution gives
\begin{equation}\label{eq:AAfield}
 \dot\rho=F_1(\rho,\varphi,\theta),\qquad
 \dot\varphi=\nu(\rho)+F_2(\rho,\varphi,\theta),\qquad
 \dot\theta=\omega,
\end{equation}
where
\begin{equation}\label{eq:nu}
 \nu(\rho)=c_0\rho^q,\qquad c_0>0,
\end{equation}
and, for suitable positive constants $\kappa_\ell$,
\begin{align}
F_1={}&-\kappa_1\sum_{j=0}^{m}
 a_j(\theta)\rho^{1+(2j+1)\alpha}S^{2j+1}C^2
 \kappa_2\sum_{j=0}^{n-1}
 b_j(\theta)\rho^{(2j+2)\alpha}S^{2j+1}C,
 \label{eq:F1}\\
F_2={}&\kappa_3\sum_{j=0}^{m}
 a_j(\theta)\rho^{(2j+1)\alpha}S^{2j+2}C
 \kappa_4\sum_{j=0}^{n-1}
 b_j(\theta)\rho^{(2j+2)\alpha-1}S^{2j+2}.
 \label{eq:F2}
\end{align}
Changing the sign of $\kappa_2$ or $\kappa_4$ according to the normalization
in \eqref{eq:AA} does not affect any estimate below.

The involution induced by \eqref{eq:AA} is
\[
 \cR_0(\rho,\varphi,\theta)=(\rho,-\varphi,-\theta),
\]
and \eqref{eq:rev} is equivalent to
\begin{equation}\label{eq:Fparity}
 F_1(\rho,-\varphi,-\theta)=-F_1(\rho,\varphi,\theta),\qquad
 F_2(\rho,-\varphi,-\theta)= F_2(\rho,\varphi,\theta).
\end{equation}
Also,
\begin{equation}\label{eq:zeroaverage}
 \int_0^{2\pi}S(T_0\varphi)^{2j+1}
 C(T_0\varphi)^2\,d\varphi=0.
\end{equation}

\begin{lemma}[Equivariant changes preserve reversibility]
\label{lem:revchange}
Let $z=(r,\psi)$, $S=\operatorname{diag}(1,-1)$, and
\[
 \mathscr G(z,\theta)=(Sz,-\theta).
\]
Suppose the extended vector field
$\widetilde X=(X(z,\theta),\omega)$ satisfies
\begin{equation}\label{eq:revabstract}
 \widetilde X(\mathscr G\xi)
 =-D\mathscr G\,\widetilde X(\xi).
\end{equation}
Let
\[
 \widetilde\Phi(z,\theta)=(\Phi(z,\theta),\theta)
\]
be an analytic diffeomorphism satisfying
\begin{equation}\label{eq:equivariance}
 \widetilde\Phi\circ\mathscr G
 =\mathscr G\circ\widetilde\Phi .
\end{equation}
Then the pull-back vector field
\begin{equation}\label{eq:pullback}
 \widetilde Y(\xi)
 =D\widetilde\Phi(\xi)^{-1}
 \widetilde X(\widetilde\Phi(\xi))
\end{equation}
is reversible with respect to $\mathscr G$.

If $\Phi(z,\theta)=z+W(z,\theta)$ and $W=(U,V)$, condition
\eqref{eq:equivariance} is equivalent to
\begin{equation}\label{eq:UVequiv}
 U(r,-\psi,-\theta)=U(r,\psi,\theta),\qquad
 V(r,-\psi,-\theta)=-V(r,\psi,\theta).
\end{equation}
\end{lemma}

\begin{proof}
Differentiating \eqref{eq:equivariance} gives
\begin{equation}\label{eq:diffEquiv}
 D\widetilde\Phi(\mathscr G\xi)D\mathscr G
 =D\mathscr G\,D\widetilde\Phi(\xi).
\end{equation}
Using \eqref{eq:revabstract}, \eqref{eq:equivariance}, and
\eqref{eq:diffEquiv}, we find
\begin{align*}
 \widetilde Y(\mathscr G\xi)
 &=
 D\widetilde\Phi(\mathscr G\xi)^{-1}
 \widetilde X(\widetilde\Phi(\mathscr G\xi))\\
 &=
 D\widetilde\Phi(\mathscr G\xi)^{-1}
 \widetilde X(\mathscr G\widetilde\Phi(\xi))\\
 &=
 -D\widetilde\Phi(\mathscr G\xi)^{-1}
 D\mathscr G\,\widetilde X(\widetilde\Phi(\xi))\\
 &=-D\mathscr G\,D\widetilde\Phi(\xi)^{-1}
 \widetilde X(\widetilde\Phi(\xi))
 =-D\mathscr G\,\widetilde Y(\xi).
\end{align*}
Finally, $\Phi(Sz,-\theta)=S\Phi(z,\theta)$ is equivalent to
$W(Sz,-\theta)=SW(z,\theta)$, which is precisely
\eqref{eq:UVequiv}.
\end{proof}

\begin{lemma}\label{lem:symbol}
For $p\geq0$ and every multi-index $\ell$ in $(\varphi,\theta)$,
\begin{align}
 |\partial_\rho^p\partial_{(\varphi,\theta)}^\ell F_1|
 &\leq C_{p,\ell}
 \left(\rho^{1+(2m+1)\alpha-p}+\rho^{2n\alpha-p}\right),
 \label{eq:F1bound}\\
 |\partial_\rho^p\partial_{(\varphi,\theta)}^\ell F_2|
 &\leq C_{p,\ell}
 \left(\rho^{(2m+1)\alpha-p}+
       \rho^{2n\alpha-1-p}\right).
 \label{eq:F2bound}
\end{align}
\end{lemma}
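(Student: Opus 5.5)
The estimates are read off term by term from the explicit formulas \eqref{eq:F1} and \eqref{eq:F2}. Each of $F_1,F_2$ is a finite sum of monomials of the form
\[
 c\,a_j(\theta)\,\rho^{e}\,S(T_0\varphi)^{p_1}C(T_0\varphi)^{p_2}
 \quad\text{or}\quad
 c\,b_j(\theta)\,\rho^{e}\,S(T_0\varphi)^{p_1}C(T_0\varphi)^{p_2},
\]
with exponents $e$ listed there. Each monomial is a product of three factors, depending only on $\theta$, only on $\rho$, and only on $\varphi$. Hence
\[
 \partial_\rho^p\partial_{(\varphi,\theta)}^\ell
 \bigl(a_j(\theta)\rho^eS^{p_1}C^{p_2}\bigr)
 =\bigl(e(e-1)\cdots(e-p+1)\bigr)\rho^{e-p}\,
 \partial_\theta^{\ell_\theta}a_j(\theta)\,
 \partial_\varphi^{\ell_\varphi}\bigl(S^{p_1}C^{p_2}\bigr).
\]
We bound the three factors separately.

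\emph{The $\theta$ factor.} By \ref{H1}, $a_j,b_j$ are holomorphic on $\T^d_{s_0}$. On the real torus $\T^d$, which is compact, every derivative $\partial_\theta^{\ell_\theta}a_j$ and $\partial_\theta^{\ell_\theta}b_j$ is therefore bounded by a constant depending on $\ell$. If bounds on a thinner complex strip are wanted, Cauchy estimates on $\T^d_{s_0/2}$ give the same conclusion.

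\emph{The $\varphi$ factor.} By \eqref{eq:SC}, $S$ and $C$ are analytic and $T_0$-periodic, so $S(T_0\varphi)$ and $C(T_0\varphi)$ are analytic $1$-periodic functions of $\varphi$. By \eqref{eq:SCproperties} they are bounded. Differentiating with \eqref{eq:SC} shows that every $\varphi$-derivative of $S^{p_1}C^{p_2}$ is again a polynomial in $S$ and $C$, hence bounded.

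\emph{The $\rho$ factor.} Each term yields $\rho^{e-p}$ times a constant. The estimate is used only for large $\rho$, say $\rho\geq1$, where a smaller exponent gives a smaller power. It therefore suffices to identify the maximal exponent in each group.

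For $F_1$, the $a_j$-terms have $e=1+(2j+1)\alpha$, which is maximal at $j=m$. The $b_j$-terms have $e=(2j+2)\alpha$, which is maximal at $j=n-1$, giving $2n\alpha$.

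For $F_2$, the $a_j$-terms have $e=(2j+1)\alpha$, maximal at $j=m$. The $b_j$-terms have $e=(2j+2)\alpha-1$, maximal at $2n\alpha-1$.

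Summing finitely many such bounds gives \eqref{eq:F1bound} and \eqref{eq:F2bound} on $\rho\geq1$. The constants $C_{p,\ell}$ absorb the $\kappa_i$, the falling-factorial coefficients, and the finitely many sup-norms of the coefficients and their derivatives.

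The only point needing care is the restriction to $\rho\geq1$. It is implicit in the statement, since for $\rho<1$ lower powers would dominate and the claimed bounds could fail. We will state the lemma for $\rho\geq1$, or more generally for $\rho\geq\rho_0$, which is the only regime used later.

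If the lemma is later applied on a complex neighbourhood, the argument carries over unchanged. One uses the bound $|\rho^{e-p}|\leq C|\rho|^{e-p}$ on a sector around the positive axis, together with bounds for $S,C$ on a complex strip in $\varphi$, which hold because these functions are analytic and periodic.

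There is no genuine obstacle. The work is bookkeeping of the exponents.
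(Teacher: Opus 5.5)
Your proposal is correct and follows the paper's proof. Like the paper, you differentiate the explicit monomials in \eqref{eq:F1}--\eqref{eq:F2} term by term, use the boundedness of $S,C$ and their derivatives, and pick out the maximal exponents at $j=m$ and $j=n-1$. Your explicit restriction to $\rho\geq1$ and your bound on the $\theta$-derivatives of $a_j,b_j$ make precise what the paper leaves implicit.
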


\begin{proof}
For the $j$-th damping term in \eqref{eq:F1} and \eqref{eq:F2},
\begin{align*}
 \partial_\rho^p
 \rho^{1+(2j+1)\alpha}
 &=\prod_{\ell=0}^{p-1}
 \bigl(1+(2j+1)\alpha-\ell\bigr)
 \rho^{1+(2j+1)\alpha-p},\\
 \partial_\rho^p
 \rho^{(2j+1)\alpha}
 &=\prod_{\ell=0}^{p-1}
 \bigl((2j+1)\alpha-\ell\bigr)
 \rho^{(2j+1)\alpha-p}.
\end{align*}
The corresponding restoring terms have powers
$(2j+2)\alpha-p$ and $(2j+2)\alpha-1-p$.
Taking $j=m$ and $j=n-1$, respectively, proves
\eqref{eq:F1bound}--\eqref{eq:F2bound}, because $S,C$ and all their
derivatives are bounded.
\end{proof}

\section{Finite normalization of the large perturbation}

\subsection{Complex domains, norms and Fourier cut-off}

Fix a large $A$ and a closed interval
$I\subset[A,2A]$.  For $r,s>0$, write
\[
 \cD(I,r,s)=\{(\rho,\varphi,\theta):
 \operatorname{dist}(\rho,I)<r,\
 |\operatorname{Im}\varphi|<s,\
 |\operatorname{Im}\theta|<s\}.
\]
For a scalar function $h$ define
\[
 |h|_{r,s}=\sup_{\cD(I,r,s)}|h|,
\]
and for a vector $H=(H_1,H_2)$ use the weighted norm
\begin{equation}\label{eq:weightednorm}
 \norm{H}_{r,s}
 =A^{-1}|H_1|_{r,s}+|H_2|_{r,s}
  +|\partial_\rho H_1|_{r,s}
  +A|\partial_\rho H_2|_{r,s}.
\end{equation}

If
\[
 h(\rho,\varphi,\theta)
 =\sum_{(k,\ell)\in\Z^{1+d}}
 \widehat h(\rho,k,\ell)
 e^{\ii(k\varphi+\langle\ell,\theta\rangle)},
\]
let
\begin{equation}\label{eq:PiK}
 \Pi_Kh=\sum_{|k|+|\ell|\leq K}
 \widehat h(\rho,k,\ell)
 e^{\ii(k\varphi+\langle\ell,\theta\rangle)},\qquad
 \Pi_K^\perp h=h-\Pi_Kh .
\end{equation}
For $0<\sigma<s$,
\begin{equation}\label{eq:Fouriertail}
 |\Pi_K^\perp h|_{r,s-\sigma}
 \leq C_d K^d e^{-K\sigma}|h|_{r,s}.
\end{equation}
Indeed,
\[
 |\widehat h(\rho,k,\ell)|
 \leq |h|_{r,s}e^{-(|k|+|\ell|)s},
\]
and
\[
 \sum_{N>K}\#\{(k,\ell):|k|+|\ell|=N\}e^{-N\sigma}
 \leq C_dK^de^{-K\sigma}.
\]
We take
\begin{equation}\label{eq:K}
 K=c_*\log A.
\end{equation}

\subsection{The non-resonant interval}

Let
\begin{equation}\label{eq:divisor}
 D_{k\ell}(\rho)
 =k\nu(\rho)+\langle\ell,\omega\rangle.
\end{equation}
For $k\neq0$, define
\begin{equation}\label{eq:badset}
 \mathcal B_{k\ell}
 =\left\{\rho\in[A,2A]:
 |D_{k\ell}(\rho)|
 <\frac{\delta A^q}{(1+|\ell|)^{d+2}}\right\},
 \qquad \delta=K^{-4d-6}.
\end{equation}

\begin{lemma}\label{lem:nonresonance}
For all sufficiently large $A$, there is a closed interval
$I_A\subset[A,2A]$ such that
\begin{equation}\label{eq:IAlength}
 |I_A|\geq cAK^{-5d-8}
\end{equation}
and, for $\rho\in I_A$ and
$0<|k|+|\ell|\leq K$,
\begin{equation}\label{eq:NR}
 |D_{k\ell}(\rho)|\geq
 \begin{cases}
 \gamma(1+|\ell|)^{-d-2},& k=0,\\[2mm]
 \delta A^q(1+|\ell|)^{-d-2},& k\neq0.
 \end{cases}
\end{equation}
\end{lemma}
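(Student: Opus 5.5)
The plan is a measure-counting argument: remove all bad sets $\mathcal B_{k\ell}$ with $0<|k|\le K$, $|\ell|\le K$ from $[A,2A]$, bound their total measure, and then find a long gap in the complement. The case $k=0$ needs no removal. For $\ell\neq0$ the Diophantine condition \eqref{eq:DC} gives $|D_{0\ell}|=|\langle\ell,\omega\rangle|\ge\gamma|\ell|^{-d-2}\ge\gamma(1+|\ell|)^{-d-2}$ for every $\rho$. So only $k\neq0$ matters.

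\emph{Measure of each bad set.} For fixed $k\neq0$, $\rho\mapsto D_{k\ell}(\rho)=kc_0\rho^q+\langle\ell,\omega\rangle$ is strictly monotone on $[A,2A]$. Its derivative satisfies
\[
|D_{k\ell}'(\rho)|=|k|c_0q\rho^{q-1}\ge c_0q\,2^{q-1}A^{q-1}.
\]
Hence $\mathcal B_{k\ell}$ is a single interval (possibly empty) of length at most
\[
\frac{2\delta A^q(1+|\ell|)^{-d-2}}{c_0q2^{q-1}A^{q-1}}\le C\delta A.
\]
Moreover, $\mathcal B_{k\ell}$ is nonempty only if $|\langle\ell,\omega\rangle|\approx|k|c_0\rho^q\gtrsim A^q$. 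Since $|\ell|\le K=c_*\log A$, this forces $\ell$ to be large compared with a power of $A$, which fails for large $A$. So in fact all $\mathcal B_{k\ell}$ with $|\ell|\le K$ are empty once $A$ is large: $|\langle\ell,\omega\rangle|\le|\omega|K<\tfrac12c_0A^q$.

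If one prefers not to rely on this observation, the crude count still works. There are at most $CK^{d+1}$ pairs $(k,\ell)$, so the union of bad sets is a union of at most $N=CK^{d+1}$ intervals of total measure at most $CK^{d+1}\delta A=CAK^{-3d-5}$.

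\emph{Extracting the interval.} The complement of $N$ intervals in $[A,2A]$ consists of at most $N+1$ intervals. Its total length is at least $A-CAK^{-3d-5}\ge A/2$, so some component has length at least
\[
\frac{A}{2(N+1)}\ge cAK^{-d-1}\ge cAK^{-5d-8}.
\]
Take $I_A$ to be a closed subinterval of that component of this length, which gives \eqref{eq:IAlength}. On $I_A$, \eqref{eq:NR} holds by construction for $k\neq0$ and by the Diophantine step for $k=0$.

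The main point to handle carefully is the derivative lower bound together with the bookkeeping of constants. The exponent $-5d-8$ is generous, so the only genuine ingredient is the monotonicity of $D_{k\ell}$ in $\rho$ from \eqref{eq:nu} with $q>0$. The emptiness observation makes the argument essentially trivial for large $A$, so I would present the counting argument, which works in either case.
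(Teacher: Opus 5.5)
Your proposal is correct, and the counting argument you choose to present is essentially the paper's proof. Both arguments bound each $\mathcal B_{k\ell}$ using $|D_{k\ell}'(\rho)|\geq cA^{q-1}$ on $[A,2A]$, show that the union has measure well below $A$, and take a longest component among the at most $CK^{d+1}$ components of the complement. The only difference is bookkeeping. The paper keeps the weights and sums $C\delta A\,|k|^{-1}(1+|\ell|)^{-d-2}$ to get $C\delta A\log K\leq A/4$. You use the cruder bound of $CK^{d+1}$ pairs times $C\delta A$, i.e.\ $CAK^{-3d-5}$. Both suffice because $\delta=K^{-4d-6}$.

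Your side remark is also correct, and it is a sharper route that the paper does not use. For $k\neq0$ and $\rho\in[A,2A]$ you have $|k|\nu(\rho)\geq c_0A^q$, while $|\langle\ell,\omega\rangle|\leq C|\omega|K=O(\log A)$ for $|\ell|\leq K$. Hence $|D_{k\ell}(\rho)|\geq\frac12c_0A^q>\delta A^q(1+|\ell|)^{-d-2}$ for large $A$, so every relevant $\mathcal B_{k\ell}$ is empty. Thus $I_A=[A,2A]$ already satisfies the lemma, and the excision and pigeonhole step are superfluous in this setting.
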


\begin{proof}
Since
\[
 \nu'(\rho)=c_0q\rho^{q-1},\qquad
 cA^{q-1}\leq|\nu'(\rho)|\leq CA^{q-1}
 \quad(A\leq\rho\leq2A),
\]
the mean value theorem gives, for $k\neq0$,
\begin{equation}\label{eq:badmeasure1}
 \operatorname{meas}\mathcal B_{k\ell}
 \leq
 \frac{C\delta A^q}
 {|k|A^{q-1}(1+|\ell|)^{d+2}}
 =
 \frac{C\delta A}
 {|k|(1+|\ell|)^{d+2}}.
\end{equation}
Therefore
\begin{align}
 \operatorname{meas}
 \bigcup_{\substack{0<|k|+|\ell|\leq K\\ k\neq0}}
 \mathcal B_{k\ell}
 &\leq
 C\delta A
 \sum_{1\leq|k|\leq K}\frac1{|k|}
 \sum_{\ell\in\Z^d}\frac1{(1+|\ell|)^{d+2}}
 \notag\\
 &\leq C\delta A\log K
 \leq \frac A4. \label{eq:badmeasure2}
\end{align}
The complement has at most $CK^{d+1}$ components.  One component
$I_A$ consequently satisfies
\[
 |I_A|\geq
 \frac{A/2}{CK^{d+1}}
 \geq cAK^{-5d-8}.
\]
This proves the second line of \eqref{eq:NR}; the first line is
\eqref{eq:DC}.
\end{proof}

Let
\begin{equation}\label{eq:r0}
 r_0=cAK^{-6d-10}.
\end{equation}
By Cauchy's estimate and \eqref{eq:divisor},
\[
 |D_{k\ell}(\rho)-D_{k\ell}(\rho_0)|
 \leq C|k|A^{q-1}r_0
 \leq \frac12\delta A^q(1+|\ell|)^{-d-2}
\]
for $\rho_0\in I_A$, $|\rho-\rho_0|<r_0$, and
$|k|+|\ell|\leq K$.  Thus \eqref{eq:NR} remains valid on the complex
$r_0$-neighbourhood of $I_A$, with its right-hand side divided by two.

\subsection{The finite induction}

The reversible polynomial reduction of Liu--Zanolin is used only once,
before the Fourier iteration.  In the analytic setting it yields, on
$\cD(I_A,r_0,s_0/4)$, a reversible transformation $\Psi_0$ such that
\begin{equation}\label{eq:initialNF}
 \begin{aligned}
 \dot\rho&=a_0(\rho,\theta)+F_0(\rho,\varphi,\theta),\\
 \dot\varphi&=\nu(\rho)+b_0(\rho,\theta)
                   +G_0(\rho,\varphi,\theta),\\
 \dot\theta&=\omega,
 \end{aligned}
\end{equation}
For clarity, the reversibility of $\Psi_0$ can be verified at every
elementary polynomial reduction.  The radial and angular changes have,
respectively, the form
\begin{equation}\label{eq:elementaryChanges}
 \Phi^{(r)}(r,\psi,\theta)
 =(r+U(r,\psi,\theta),\psi,\theta),\qquad
 \Phi^{(\psi)}(r,\psi,\theta)
 =(r,\psi+V(r,\psi,\theta),\theta).
\end{equation}
If $h_1(r,-\psi,-\theta)=-h_1(r,\psi,\theta)$ and
$\av{h_1}_{\psi}=0$, the normalized primitive
\begin{equation}\label{eq:Uprimitive}
 U(r,\psi,\theta)
 =\frac1{\nu(r)}
 \left[
 \int_0^\psi h_1(r,\tau,\theta)\,d\tau
 -\av{\int_0^\psi h_1(r,\tau,\theta)\,d\tau}_{\psi}
 \right]
\end{equation}
satisfies
\[
 U(r,-\psi,-\theta)=U(r,\psi,\theta).
\]
Similarly, if
$h_2(r,-\psi,-\theta)=h_2(r,\psi,\theta)$ and
$\av{h_2}_{\psi}=0$, then
\begin{equation}\label{eq:Vprimitive}
 V(r,\psi,\theta)
 =\frac1{\nu(r)}
 \left[
 \int_0^\psi h_2(r,\tau,\theta)\,d\tau
 -\av{\int_0^\psi h_2(r,\tau,\theta)\,d\tau}_{\psi}
 \right]
\end{equation}
satisfies
\[
 V(r,-\psi,-\theta)=-V(r,\psi,\theta).
\]
Thus both maps in \eqref{eq:elementaryChanges} satisfy
\eqref{eq:equivariance}; Lemma \ref{lem:revchange} proves inductively that
every intermediate polynomial normal form, and hence \eqref{eq:initialNF},
is reversible.  Its coefficients satisfy
\begin{equation}\label{eq:initialparity}
 \begin{aligned}
 a_0(\rho,-\theta)&=-a_0(\rho,\theta),&
 b_0(\rho,-\theta)&= b_0(\rho,\theta),\\
 F_0(\rho,-\varphi,-\theta)&=-F_0(\rho,\varphi,\theta),&
 G_0(\rho,-\varphi,-\theta)&= G_0(\rho,\varphi,\theta),
 \end{aligned}
\end{equation}
and
\begin{equation}\label{eq:epsilon0}
 \av{F_0}_{\varphi}=\av{G_0}_{\varphi}=0,\qquad
 \norm{(F_0,G_0)}_{r_0,s_0/4}\leq
 \eps_0=A^{-\sigma_0}.
\end{equation}
If $\overline N_0=(a_0,b_0)^T$, the same reduction gives
$\kappa_0>0$ such that
\begin{equation}\label{eq:averageTame}
 \Gamma_A\left(
 |D_z\overline N_0|+
 r_0|D_z^2\overline N_0|
 \right)\leq A^{-\kappa_0},
\end{equation}
where $\Gamma_A$ is defined in \eqref{eq:Gamma}.  Thus the averaged part
has strictly lower symbolic order than the fast field $(0,\nu(\rho))^T$.
Here $\sigma_0>0$ depends only on $n,m$.

For reference, the power count behind \eqref{eq:epsilon0} is
\begin{equation}\label{eq:powercount}
 n-2(m+1)\geq0,\qquad
 \rho^{-q}\rho^{(2m+1)\alpha}
 =\rho^{-(n-2m-1)/(n+2)}.
\end{equation}
The leading radial damping monomial is first integrated in $\varphi$ using
\eqref{eq:zeroaverage}; substitution of this primitive in the angular
equation produces one additional symbolic derivative $\rho^{-1}$.  Thus
one complete radial--angular reduction has gain
\begin{equation}\label{eq:sigmagain}
 \rho^{-\sigma_*},\qquad
 \sigma_*=\frac1{n+2}
 \min\{1,n-2m-1\}>0.
\end{equation}
After a finite number of such polynomial reductions,
$\sigma_0>0$ in \eqref{eq:epsilon0}.  This is the only place where
\eqref{eq:degree} is used.

We now give the detailed finite Fourier normalization.

\begin{proposition}[Finite-step normal form]\label{prop:nf}
Let $L>0$.  If $c_*$ in \eqref{eq:K} and then $A$ are sufficiently large,
there exist an integer
\begin{equation}\label{eq:Mchoice}
 M=\min\{j\geq1:\sigma_0+\tfrac12j\kappa_0\geq L+4\}
\end{equation}
and reversible analytic transformations
\[
 \Phi_v=\Id+W_v,\qquad 0\leq v<M,
\]
such that
\[
 \Psi^{(M)}=\Psi_0\circ\Phi_0\circ\cdots\circ\Phi_{M-1}
\]
transforms \eqref{eq:AAfield} into
\begin{equation}\label{eq:NFfinal}
 \begin{aligned}
 \dot r&=a_M(r,\theta)+F_M(r,\psi,\theta),\\
 \dot\psi&=\nu(r)+b_M(r,\theta)+G_M(r,\psi,\theta),\\
 \dot\theta&=\omega,
 \end{aligned}
\end{equation}
where
\begin{equation}\label{eq:finalsmall}
 \norm{(F_M,G_M)}_{r_M,s_M}\leq A^{-L},
\end{equation}
\begin{equation}\label{eq:finalparity}
 \begin{aligned}
 a_M(r,-\theta)&=-a_M(r,\theta),&
 b_M(r,-\theta)&= b_M(r,\theta),\\
 F_M(r,-\psi,-\theta)&=-F_M(r,\psi,\theta),&
 G_M(r,-\psi,-\theta)&= G_M(r,\psi,\theta).
 \end{aligned}
\end{equation}
The final widths satisfy
\[
 r_M\geq\frac12r_0,\qquad s_M\geq\frac18s_0.
\]
\end{proposition}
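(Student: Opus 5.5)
We argue by a finite induction on $v=0,1,\dots,M$, carrying the normal form \eqref{eq:NFfinal} with index $v$ in place of $M$, together with the parity relations \eqref{eq:finalparity}, the zero averages $\av{F_v}_\psi=\av{G_v}_\psi=0$, the tameness \eqref{eq:averageTame} for $\overline N_v=(a_v,b_v)^T$ (with $A^{-\kappa_0}$ replaced by $2A^{-\kappa_0}$), and a bound $\norm{(F_v,G_v)}_{r_v,s_v}\le\eps_v$. The base case $v=0$ is \eqref{eq:initialNF}--\eqref{eq:averageTame}. The widths shrink by $r_{v+1}=r_v-r_0/(2M)$ and $s_{v+1}=s_v-\sigma_v$ with $\sigma_v=s_0/(8M)$. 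Since $M$ depends only on $L,\sigma_0,\kappa_0$ and not on $A$, this gives $r_M\ge r_0/2$ and $s_M\ge s_0/8$.

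At step $v$ we split $(F_v,G_v)=\Pi_K(F_v,G_v)+\Pi_K^\perp(F_v,G_v)$. We look for $\Phi_v=\Id+W_v$ with $W_v=(U_v,V_v)$ solving the truncated homological equation
\[
\omega\cdot\partial_\theta U+\nu\,\partial_\psi U=\Pi_KF_v,\qquad
\omega\cdot\partial_\theta V+\nu\,\partial_\psi V-\nu'(r)U=\Pi_KG_v .
\]
Only modes with $k\ne0$ occur, because the $\psi$-averages vanish. In Fourier coefficients this reads $\widehat U=\widehat F/(\ii D_{k\ell})$ and $\widehat V=(\widehat G+\nu'\widehat U)/(\ii D_{k\ell})$. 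By Lemma \ref{lem:nonresonance}, extended to the complex $r_0$-neighbourhood, the divisors satisfy $|D_{k\ell}|\ge\frac12\delta A^qK^{-d-2}$. In the weighted norm \eqref{eq:weightednorm}, the factor $A^{-1}$ on the radial component absorbs $\nu'\sim A^{q-1}$. Cauchy estimates in $\rho$ over a strip of width $r_0/(2M)\sim AK^{-6d-10}$ cost only powers of $K$. Hence $\norm{W_v}\le C_vK^{c}\delta^{-1}A^{-q}\eps_v$, which is $A^{-q/2}\eps_v$ once $A$ is large, since $K=c_*\log A$. Because the divisor is multiplied by $\ii$, Fourier division maps odd data to even solutions and even data to odd solutions. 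The parities \eqref{eq:finalparity} of $F_v,G_v$ therefore give \eqref{eq:UVequiv} for $(U_v,V_v)$. Lemma \ref{lem:revchange} then shows that the pulled-back field is again reversible, and the parities propagate to $v+1$.

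Next we compute the new field. It has the form $(0,\nu)+\overline N_v+\Pi_K^\perp(F_v,G_v)+\mathcal E_v$, where $\mathcal E_v$ collects three contributions: the commutator terms $[\overline N_v,W_v]$; the second-order Taylor terms of $\nu$ and of $(F_v,G_v)$ along $W_v$; and the terms coming from inverting $D\Phi_v$. The commutator is bounded by $C_vA^{-\kappa_0}\eps_v$ using \eqref{eq:averageTame}; this is exactly why the averaged part must be of lower symbolic order. The quadratic terms are bounded by $B_v\eps_v^2$, using $\norm{W_v}\lesssim\eps_v$ and Cauchy estimates. The high modes satisfy the bound $CK^dA^{-c_*\sigma_v}\eps_v$ by \eqref{eq:Fouriertail}.

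The $\psi$-average of the new remainder, which is odd/even in $\theta$ by reversibility, is absorbed into $\overline N_{v+1}=\overline N_v+\av{\cdot}_\psi$. Its size is at most $\eps_v$, which is smaller than $A^{-\kappa_0}\Gamma_A^{-1}$ after possibly shrinking $\kappa_0$ once at the start, so the tameness is kept with constant $2$. Altogether
\[
\eps_{v+1}\le C_vA^{-\kappa_0}\eps_v+B_v\eps_v^2+CK^dA^{-c_*\sigma_v}\eps_v .
\]
We choose $c_*$ so large that $c_*\sigma_v\ge L+4+\kappa_0$; this is possible because $\sigma_v$ depends only on $M$ and $s_0$. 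For $A$ large, all three terms are then at most $A^{-\kappa_0/2}\eps_v/3$ as long as $\eps_v\le A^{-\kappa_0}$. If instead $\eps_v>A^{-\kappa_0}$, we first run one preliminary step in which the quadratic term $\eps_v^2\le A^{-\sigma_0}\eps_v$ provides the gain. By induction $\eps_v\le A^{-\sigma_0-v\kappa_0/2}$, and the choice \eqref{eq:Mchoice} gives $\eps_M\le A^{-L-4}\le A^{-L}$.

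The main obstacle is the commutator term $[\overline N_v,W_v]$, together with keeping the $A$-weights consistent. If $a_v,b_v$ were not of lower symbolic order, this term would only be of size $\eps_v$ and the recurrence would give no contraction. The delicate points are, first, that the weighted norm makes $\partial_\rho$ of the angular component cost $A^{-1}$ and not $r_0^{-1}$, and second, that the $\theta$-dependence of $a_v,b_v$ only enters through $D_z\overline N_v$. I would handle this first by writing the commutator explicitly componentwise and checking each entry against $\Gamma_A$ in \eqref{eq:averageTame}.
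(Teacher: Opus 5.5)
Your proposal is correct and follows the paper's proof essentially step by step. It uses the same truncated homological equation with $N_0=(0,\nu)^T$, solved by division by $\ii D_{k\ell}$; the same parity argument via Lemma~\ref{lem:revchange}; the same splitting of the new field into commutator, Taylor and Fourier-tail terms, giving the same recurrence and choice of $c_*$; and the same absorption of $\psi$-averages into $\overline N_{v+1}$. The differences are cosmetic: you use a uniform rather than geometric loss of width. Your treatment of the case $\eps_v>A^{-\kappa_0}$ is slightly inaccurate, since one preliminary step does not suffice when $2\sigma_0<\kappa_0$. It is cleaner to replace $\kappa_0$ by $\min\{\kappa_0,\sigma_0\}$ at the outset (\eqref{eq:averageTame} persists for smaller exponents, at the cost of a larger $M$); this is exactly the relation $\sigma_0\geq\kappa_0$ that the paper's estimate of $B_v\eps_v^2$ tacitly assumes.
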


\begin{proof}
We give the induction in five steps.

\smallskip
\noindent\emph{Step 1: induction hypotheses.}
Set
\begin{equation}\label{eq:widths}
 \sigma_v=\frac{s_0}{2^{v+6}},\qquad
 s_{v+1}=s_v-4\sigma_v,\qquad
 r_{v+1}=r_v-\frac{r_0}{2^{v+3}},
\end{equation}
with $s_0$ in this display replaced initially by the width $s_0/4$
of \eqref{eq:initialNF}.  Suppose that on
$\cD(I_A,r_v,s_v)$ the $v$-th system is
\begin{equation}\label{eq:vthsystem}
 \dot z=N_v(z,\theta)+R_v(z,\theta),\qquad
 \dot\theta=\omega,
\end{equation}
where $z=(r,\psi)$,
\begin{equation}\label{eq:NvRv}
 N_v=
 \binom{a_v(r,\theta)}
       {\nu(r)+b_v(r,\theta)},\qquad
 R_v=\binom{F_v(r,\psi,\theta)}
           {G_v(r,\psi,\theta)},
\end{equation}
\[
 \av{R_v}_{\psi}=0,\qquad
 \norm{R_v}_{r_v,s_v}\leq\eps_v,
\]
and, with $\overline N_v=(a_v,b_v)^T$,
\begin{equation}\label{eq:averageTameV}
 \Gamma_A\left(
 |D_z\overline N_v|+
 r_v|D_z^2\overline N_v|
 \right)\leq2A^{-\kappa_0}.
\end{equation}
and the parity relations in \eqref{eq:finalparity} hold with $M$ replaced by
$v$.  For $v=0$, these assertions are \eqref{eq:initialNF}--\eqref{eq:epsilon0}.

\smallskip
\noindent\emph{Step 2: the homological equation.}
Let
\[
 R_v^{\leq}=\Pi_KR_v,\qquad R_v^>=\Pi_K^\perp R_v.
\]
We solve the vector homological equation
\begin{equation}\label{eq:homological}
 \mathcal H_0W_v=R_v^{\leq},\qquad
 \mathcal H_0W
 =\omega\cdot\partial_\theta W+
 D_zW\,N_0-D_zN_0\,W,\qquad
 N_0=\binom0{\nu(r)},
\qquad \av{W_v}_{\psi}=0.
\end{equation}
Writing $W_v=(U_v,V_v)$ and
$R_v^{\leq}=(F_v^{\leq},G_v^{\leq})$, its solution is
\begin{align}
 U_v(r,\psi,\theta)
 &=\sum_{\substack{|k|+|\ell|\leq K\\k\neq0}}
 \frac{\widehat F_v(r,k,\ell)}
 {\ii D_{k\ell}(r)}
 e^{\ii(k\psi+\langle\ell,\theta\rangle)},\label{eq:Uv}\\
 V_v(r,\psi,\theta)
 &=\sum_{\substack{|k|+|\ell|\leq K\\k\neq0}}
 \left\{
 \frac{\widehat G_v(r,k,\ell)}
 {\ii D_{k\ell}(r)}
 +\frac{\nu'(r)\widehat F_v(r,k,\ell)}
 {(\ii D_{k\ell}(r))^2}
 \right\}
 e^{\ii(k\psi+\langle\ell,\theta\rangle)}.
\label{eq:Vv}
\end{align}
There are no $k=0$ terms because $\av{R_v}_{\psi}=0$.
By \eqref{eq:NR},
\begin{equation}\label{eq:Gamma}
 \Gamma_A=
 C\delta^{-1}A^{-q}K^{d+2}
 \leq CA^{-q}K^{5d+8}.
\end{equation}
Using Cauchy's estimate on the loss $\sigma_v$,
\begin{equation}\label{eq:Westimate}
 \norm{W_v}_{r_v-\frac{r_0}{2^{v+4}},s_v-\sigma_v}
 \leq
 C\Gamma_A\sigma_v^{-(d+2)}\eps_v
 =:\eta_v.
\end{equation}
Moreover, differentiating $D_{k\ell}^{-1}$ gives
\[
 \partial_r D_{k\ell}^{-1}
 =-\frac{k\nu'(r)}{D_{k\ell}(r)^2},
\]
which is included in the weighted norm in \eqref{eq:Westimate}.

The induction hypothesis gives, for real $r$,
\begin{equation}\label{eq:FourierParity}
 \widehat F_v(r,-k,-\ell)=-\widehat F_v(r,k,\ell),
 \qquad
 \widehat G_v(r,-k,-\ell)= \widehat G_v(r,k,\ell).
\end{equation}
Since
\begin{equation}\label{eq:divisorParity}
 D_{-k,-\ell}(r)=-D_{k\ell}(r),
\end{equation}
formulas \eqref{eq:Uv}--\eqref{eq:Vv} imply
\[
 \widehat U_v(r,-k,-\ell)=\widehat U_v(r,k,\ell),
 \qquad
 \widehat V_v(r,-k,-\ell)=-\widehat V_v(r,k,\ell).
\]
Equivalently,
\begin{equation}\label{eq:Wparity}
 U_v(r,-\psi,-\theta)=U_v(r,\psi,\theta),\qquad
 V_v(r,-\psi,-\theta)=-V_v(r,\psi,\theta).
\end{equation}
Therefore $\Phi_v=\Id+W_v$ commutes with
$(r,\psi,\theta)\mapsto(r,-\psi,-\theta)$.  Lemma
\ref{lem:revchange} now shows directly that the vector field after the
$v$-th change is reversible.

\smallskip
\noindent\emph{Step 3: exact transformed vector field.}
Use old coordinates $z=\Phi_v(z_+,\theta)=z_++W_v(z_+,\theta)$.
Then
\[
 \dot z=(I+D_zW_v)\dot z_+
       +\omega\cdot\partial_\theta W_v.
\]
Consequently,
\begin{equation}\label{eq:exactpush}
 X_{v+1}
 =(I+D_zW_v)^{-1}
 \left[
 N_v\circ\Phi_v+R_v\circ\Phi_v
 -\omega\cdot\partial_\theta W_v
 \right].
\end{equation}
Write $N_v=N_0+\overline N_v$.  Subtract
$(I+D_zW_v)N_v$ in \eqref{eq:exactpush} and use
\eqref{eq:homological}.  We obtain
\begin{equation}\label{eq:pushsplit}
 X_{v+1}
 =N_v+\mathcal E_v,
\end{equation}
where
\begin{equation}\label{eq:Ev}
 \mathcal E_v
 =(I+D_zW_v)^{-1}
 \left[
 R_v^>+\mathcal Q_v
 \right].
\end{equation}
Here
\begin{align}
 \mathcal Q_v={}&
 N_0\circ\Phi_v-N_0-D_zN_0W_v
 +\overline N_v\circ\Phi_v-\overline N_v
\notag\\
&-D_zW_v\,\overline N_v
 +R_v\circ\Phi_v-R_v .
\label{eq:Qv}
\end{align}
The Taylor remainders have the exact integral form
\begin{align}
 N_0\circ\Phi_v-N_0-D_zN_0W_v
 &=\int_0^1(1-t)
 D_z^2N_0(z+tW_v)[W_v,W_v]\,dt,
 \label{eq:TaylorN}\\
 R_v\circ\Phi_v-R_v
 &=\int_0^1
 D_zR_v(z+tW_v)W_v\,dt.
\label{eq:TaylorR}
\end{align}
Also,
\begin{equation}\label{eq:TaylorAverage}
 \overline N_v\circ\Phi_v-\overline N_v
 =\int_0^1D_z\overline N_v(z+tW_v)W_v\,dt.
\end{equation}

Define the new average and oscillatory parts by
\begin{equation}\label{eq:newaverage}
 N_{v+1}=N_v+\av{\mathcal E_v}_{\psi},\qquad
 R_{v+1}=\mathcal E_v-\av{\mathcal E_v}_{\psi}.
\end{equation}
Thus $\av{R_{v+1}}_{\psi}=0$ exactly.  More explicitly, reversibility gives
\[
 \mathcal E_{v,1}(r,-\psi,-\theta)
 =-\mathcal E_{v,1}(r,\psi,\theta),\qquad
 \mathcal E_{v,2}(r,-\psi,-\theta)
 = \mathcal E_{v,2}(r,\psi,\theta).
\]
Taking the $\psi$-average yields
\[
 \av{\mathcal E_{v,1}}_\psi(r,-\theta)
 =-\av{\mathcal E_{v,1}}_\psi(r,\theta),\qquad
 \av{\mathcal E_{v,2}}_\psi(r,-\theta)
 = \av{\mathcal E_{v,2}}_\psi(r,\theta).
\]
Consequently both $N_{v+1}$ and $R_{v+1}$ have the required parity, so
\eqref{eq:finalparity} propagates from $v$ to $v+1$.
Furthermore, \eqref{eq:newaverage}, Cauchy's estimate, and
\eqref{eq:recurrence0} below imply
\[
 \Gamma_A\left(
 |D_z\overline N_{v+1}|+
 r_{v+1}|D_z^2\overline N_{v+1}|
 \right)
 \leq
 \Gamma_A\left(
 |D_z\overline N_v|+
 r_v|D_z^2\overline N_v|
 \right)
 +C\Gamma_A\sigma_v^{-2}\eps_v .
\]
For large $A$, \eqref{eq:epsinduction} makes the last term at most
$2^{-v-2}A^{-\kappa_0}$; hence \eqref{eq:averageTameV} also propagates.

\smallskip
\noindent\emph{Step 4: the recurrence.}
Assume $\eta_v\leq c\min\{r_v-r_{v+1},\sigma_v\}$.
Then
\[
 \norm{(I+D_zW_v)^{-1}}\leq2.
\]
Equations \eqref{eq:Fouriertail},
\eqref{eq:Westimate}, \eqref{eq:Ev},
\eqref{eq:TaylorN}, \eqref{eq:TaylorR}, and
\eqref{eq:TaylorAverage} give
\begin{equation}\label{eq:recurrence0}
 \eps_{v+1}
 \leq
 C A^{-\kappa_0}\sigma_v^{-(d+3)}\eps_v
 +C\sigma_v^{-(d+3)}\Gamma_A\eps_v^2
 +C K^d e^{-K\sigma_v}\eps_v.
\end{equation}
Put
\begin{equation}\label{eq:Bv}
 B_v=C\sigma_v^{-(d+3)}K^{5d+8}.
\end{equation}
Since $A^{-q}\leq1$, \eqref{eq:recurrence0} implies
\begin{equation}\label{eq:recurrence}
 \eps_{v+1}
 \leq C_vA^{-\kappa_0}\eps_v
 +B_v\eps_v^2
 +CK^dA^{-c_*\sigma_v}\eps_v.
\end{equation}

Choose $c_*$ so that
\begin{equation}\label{eq:cstarchoice}
 c_*\min_{0\leq v<M}\sigma_v\geq L+\sigma_0+M\kappa_0+10.
\end{equation}
For $A$ sufficiently large, the second term in
\eqref{eq:recurrence} is bounded by $A^{-L-5}$.
An induction now gives
\begin{equation}\label{eq:epsinduction}
 \eps_v\leq A^{-\sigma_0-\frac12v\kappa_0+1},
 \qquad 0\leq v\leq M.
\end{equation}
Indeed, if \eqref{eq:epsinduction} holds at $v$, then
\[
 C_vA^{-\kappa_0}\eps_v+B_v\eps_v^2
 \leq
 C_v(\log A)^{C_v}
 A^{-\sigma_0-(v+1)\kappa_0+2}
 \leq
 \frac12A^{-\sigma_0-\frac12(v+1)\kappa_0+1},
\]
and \eqref{eq:cstarchoice} gives the same bound for the Fourier tail.
The estimate \eqref{eq:Westimate} and \eqref{eq:epsinduction} also verify
the smallness condition on $\eta_v$, closing the induction.

\smallskip
\noindent\emph{Step 5: termination.}
By \eqref{eq:Mchoice} and \eqref{eq:epsinduction},
\[
 \eps_M\leq
 A^{-\sigma_0-\frac12M\kappa_0+1}
 \leq A^{-L-3}\leq A^{-L}.
\]
Moreover, summing \eqref{eq:widths},
\[
 r_M\geq r_0-\sum_{v=0}^{\infty}\frac{r_0}{2^{v+3}}
 \geq\frac34r_0,
\qquad
 s_M\geq\frac{s_0}{4}
 -4\sum_{v=0}^{\infty}\frac{s_0}{2^{v+6}}
 \geq\frac{s_0}{8}.
\]
This proves \eqref{eq:finalsmall} and completes the finite induction.
\end{proof}

\section{Removal of the averaged quasi-periodic terms}

Write \eqref{eq:NFfinal} as
\begin{equation}\label{eq:averageSystem}
 \dot r=a(r,\theta)+F(r,\psi,\theta),\qquad
 \dot\psi=\nu(r)+b(r,\theta)+G(r,\psi,\theta),\qquad
 \dot\theta=\omega,
\end{equation}
where $\norm{(F,G)}\leq A^{-L}$.
By oddness of $a$,
\begin{equation}\label{eq:amean}
 [a](r):=\frac1{(2\pi)^d}\int_{\T^d}a(r,\theta)\,d\theta=0.
\end{equation}
Solve
\begin{equation}\label{eq:Qeq}
 \omega\cdot\partial_\theta Q(r,\theta)=a(r,\theta),
 \qquad [Q](r)=0.
\end{equation}
If
$a=\sum_{\ell\neq0}\widehat a(r,\ell)e^{\ii\langle\ell,\theta\rangle}$,
then
\begin{equation}\label{eq:Qformula}
 Q(r,\theta)=
 \sum_{\ell\neq0}
 \frac{\widehat a(r,\ell)}
 {\ii\langle\ell,\omega\rangle}
 e^{\ii\langle\ell,\theta\rangle}.
\end{equation}
From \eqref{eq:DC}, for $0<\sigma<s$,
\begin{equation}\label{eq:Qestimate}
 |Q|_{r,s-\sigma}
 \leq C\gamma^{-1}\sigma^{-2d-2}|a|_{r,s}.
\end{equation}
Since $a(r,-\theta)=-a(r,\theta)$, formula
\eqref{eq:Qformula} shows that $Q(r,-\theta)=Q(r,\theta)$.

Set
\begin{equation}\label{eq:rQ}
 r=u+Q(u,\theta).
\end{equation}
Because $Q(u,-\theta)=Q(u,\theta)$, the map
\[
 (u,\psi,\theta)\longmapsto
 (u+Q(u,\theta),\psi,\theta)
\]
satisfies \eqref{eq:equivariance}.  Lemma \ref{lem:revchange} therefore
shows that the transformed system remains reversible.  In particular,
\begin{equation}\label{eq:tildeParity}
 \begin{aligned}
 \widetilde F(u,-\psi,-\theta)
 &=-\widetilde F(u,\psi,\theta),\\
 \mathcal N(u,-\theta)&=\mathcal N(u,\theta),\\
 \widetilde G(u,-\psi,-\theta)
 &=\widetilde G(u,\psi,\theta).
 \end{aligned}
\end{equation}
The exact radial equation is
\begin{equation}\label{eq:uexact}
 \dot u=
 \bigl(1+\partial_uQ(u,\theta)\bigr)^{-1}
 F\bigl(u+Q,\psi,\theta\bigr).
\end{equation}
Hence
\begin{equation}\label{eq:utilde}
 \dot u=\widetilde F(u,\psi,\theta),\qquad
 \norm{\widetilde F}\leq CA^{-L}.
\end{equation}

After \eqref{eq:rQ}, write the angular equation as
\begin{equation}\label{eq:angularBefore}
 \dot\psi=\mathcal N(u,\theta)+\widetilde G(u,\psi,\theta),
\qquad \norm{\widetilde G}\leq CA^{-L}.
\end{equation}
Let
\[
 \Omega(u)=[\mathcal N](u)
 =\frac1{(2\pi)^d}\int_{\T^d}\mathcal N(u,\theta)\,d\theta
\]
and solve
\begin{equation}\label{eq:Weq}
 \omega\cdot\partial_\theta W(u,\theta)
 =\mathcal N(u,\theta)-\Omega(u),\qquad [W](u)=0.
\end{equation}
Because $\mathcal N(u,-\theta)=\mathcal N(u,\theta)$,
$W(u,-\theta)=-W(u,\theta)$.  With
\begin{equation}\label{eq:chi}
 \chi=\psi-W(u,\theta),
\end{equation}
we obtain
\begin{equation}\label{eq:finalSystem}
 \dot u=P(u,\chi,\theta),\qquad
 \dot\chi=\Omega(u)+Q_1(u,\chi,\theta),\qquad
 \dot\theta=\omega,
\end{equation}
where
\begin{equation}\label{eq:PQsmall}
 \norm{(P,Q_1)}\leq CA^{-L}.
\end{equation}
In the direction from the new variables to the old ones,
\[
 \psi=\chi+W(u,\theta).
\]
Since $W(u,-\theta)=-W(u,\theta)$,
\[
 -\chi+W(u,-\theta)
 =-\bigl(\chi+W(u,\theta)\bigr).
\]
Thus the angular shear also satisfies \eqref{eq:equivariance}.
Lemma \ref{lem:revchange} proves that \eqref{eq:finalSystem} is reversible,
and hence
\begin{equation}\label{eq:finalPQparity}
 P(u,-\chi,-\theta)=-P(u,\chi,\theta),\qquad
 Q_1(u,-\chi,-\theta)=Q_1(u,\chi,\theta).
\end{equation}
Indeed, using \eqref{eq:utilde}, \eqref{eq:angularBefore} and
\eqref{eq:Weq},
\begin{align}
 P&=\widetilde F,\label{eq:Pexact}\\
 Q_1&=\widetilde G
 -\partial_uW(u,\theta)\widetilde F.
\label{eq:Q1exact}
\end{align}
Furthermore, from \eqref{eq:nu},
\begin{equation}\label{eq:twist}
 \Omega(u)=c_0u^q+O(u^{q-\alpha}),\qquad
 \Omega'(u)=c_0qu^{q-1}
 +O(u^{q-\alpha-1}),
\end{equation}
and hence
\begin{equation}\label{eq:twistbound}
 cA^{-2/(n+2)}
 \leq|\Omega'(u)|
 \leq CA^{-2/(n+2)}.
\end{equation}

\section{Reversible KAM tori and boundedness}

We use the following standard reversible KAM theorem; see
\cite{Moser1967,Sevryuk,Liu2005,LiQiYuan}.

\begin{theorem}[Reversible KAM theorem]\label{thm:KAM}
Let
\[
 \dot u=P(u,\chi,\theta),\qquad
 \dot\chi=\Omega(u)+Q(u,\chi,\theta),\qquad
 \dot\theta=\omega
\]
be analytic and reversible under
$(u,\chi,\theta)\mapsto(u,-\chi,-\theta)$.
Assume \eqref{eq:DC},
\[
 \inf_{u\in I}|\Omega'(u)|\geq\tau>0,
\qquad \norm{(P,Q)}\leq\eps.
\]
There are constants $C,a>0$ such that, if
\begin{equation}\label{eq:KAMsmallness}
 \eps\leq C\tau^a,
\end{equation}
then a positive-measure Cantor subset of $I$ parameterizes invariant
analytic graphs
\[
 u=u_*(\chi,\theta)
\]
carrying quasi-periodic flows with frequencies
$(\Omega_*,\omega)$.
\end{theorem}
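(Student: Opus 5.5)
We do not prove this from scratch. We reduce it to the parameter-dependent reversible KAM theorem for nearly integrable reversible flows on $\R\times\T^{1+d}$, in the form of \cite{Sevryuk} and \cite{LiQiYuan}, and we check that the normalizations needed there follow from the hypotheses. The reduction has three steps.

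\emph{Step 1: change variables so that the twist disappears.} Since $\Omega'$ has constant sign on $I$ with $|\Omega'|\geq\tau$, the map $\xi=\Omega(u)$ is an analytic diffeomorphism of $I$ onto an interval $J$ of length at least $\tau|I|$. Fix $\xi_0\in J$ and set
\[
 u=\Omega^{-1}(\xi_0)+\tau^{b}w
\]
for a suitable power $b$. We treat $\xi_0$ as a parameter and $w$ as a local action in a fixed complex neighbourhood. Two facts make this legitimate. First, the change of variables acts only on $u$, so it commutes with $(u,\chi,\theta)\mapsto(u,-\chi,-\theta)$; by Lemma \ref{lem:revchange} it preserves reversibility. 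Second, Cauchy estimates on the complex strip of $I$ bound the Taylor remainder of $\Omega$ about $\Omega^{-1}(\xi_0)$ by a power of $\tau^{-1}$. The new system therefore has the form
\[
 \dot w=\tilde P,\qquad \dot\chi=\xi_0+\tilde Q,\qquad \dot\theta=\omega,
\]
with $|\tilde P|+|\tilde Q|\leq C\tau^{-a'}\eps$. This is the reason for the hypothesis $\eps\leq C\tau^{a}$.

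\emph{Step 2: run the reversible KAM iteration with $\xi_0$ as the parameter.} We use the extended frequency $(\xi_0,\omega)$ and impose
\[
 |k\xi_0+\langle\ell,\omega\rangle|\geq\gamma_1(|k|+|\ell|)^{-d-3},\qquad (k,\ell)\neq0 .
\]
Because $\omega$ already satisfies \eqref{eq:DC}, the excluded set of $\xi_0$ has measure $O(\gamma_1)$, so a positive-measure Cantor subset of $J$ survives. Each Newton step solves homological equations of exactly the type \eqref{eq:Uv}--\eqref{eq:Vv}, now with constant frequencies. The parity computation of Step~2 in Proposition \ref{prop:nf} shows that the generating changes commute with the involution. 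The essential reversible feature is the following. The $\chi$-average of the radial component is odd in $\theta$, so its $\theta$-mean vanishes, exactly as in \eqref{eq:amean}. The radial drift can therefore always be removed, and no counterterm in the $u$-equation is needed. The angular average is absorbed into the parameter: $\xi_0$ is shifted by a modifying term of size $O(\eps)$, which we invert using the twist.

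\emph{Step 3: pass to the limit.} The iteration converges quadratically on shrinking domains, with frequencies $(\Omega_*,\omega)=(\xi_0+O(\eps),\omega)$. The limiting torus is $\{w=w_*(\chi,\theta)\}$, which in the original variables is the analytic graph $u=u_*(\chi,\theta)$. A Whitney-type measure estimate shows that the surviving $u$'s form a positive-measure Cantor set in $I$.

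The main obstacle is the parameter bookkeeping in Step~2. We must show that the counterterm, that is, the frequency drift, can be cancelled by adjusting $\xi_0$ uniformly over the Cantor set, and that the measure of the excluded set is not destroyed by the factor $\tau^{-1}$ in the Lipschitz dependence. To handle this, we would first estimate the Lipschitz norm of the frequency map $u\mapsto\Omega_*(u)$. The goal is $|\partial_u(\Omega_*-\Omega)|\leq\tau/2$ under \eqref{eq:KAMsmallness}, so that $\Omega_*$ remains monotone with derivative comparable to $\tau$. The standard measure estimate in \cite{Sevryuk} then applies without further change.
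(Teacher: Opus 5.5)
The paper gives no proof of this theorem. It is quoted as a standard result from Moser, Sevryuk, Liu and Li--Qi--Yuan, so reducing it to those theorems is the paper's own route. The reversible mechanisms you identify are also the right ones:
\begin{itemize}
\item the $(\chi,\theta)$-mean of the radial component vanishes by oddness, so no counterterm is needed in the $u$-equation;
\item only the $\chi$-frequency has to be corrected, because $\dot\theta=\omega$ is unperturbed;
\item the excluded set has measure $O(\gamma_1)$ in the frequency and $O(\gamma_1/\tau)$ in $u$.
\end{itemize}
However, the sketch you add to justify the reduction contains a step that fails as written.

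In Step~1 you set $u=\Omega^{-1}(\xi_0)+\tau^b w$ with $w$ in a fixed complex neighbourhood. The angular equation becomes $\dot\chi=\Omega(u_0+\tau^b w)+Q$. So $\tilde Q$ must contain $\Omega(u_0+\tau^b w)-\xi_0=\Omega'(u_0)\tau^b w+O(\tau^{2b})$, which does not depend on $\varepsilon$ at all. Taking $\varepsilon=0$ shows that $|\tilde Q|\le C\tau^{-a'}\varepsilon$ is impossible: rescaling the action cannot make the twist disappear. Your Step~2 then uses the twist again to invert the frequency shift, which is inconsistent with having removed it.

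Likewise, Cauchy estimates bound $\Omega''$ by $\sup|\Omega|$ on a complex neighbourhood of $I$ divided by powers of its width. They do not bound it by a power of $\tau^{-1}$, since $\tau$ is only a \emph{lower} bound for $|\Omega'|$. So the constants $C,a$ must also depend on upper bounds for $\Omega$, on $|I|$, on the analyticity widths and on $\gamma$.

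There are two correct ways to carry out the reduction:
\begin{enumerate}
\item Keep $\Omega(u)$ in the unperturbed part and apply Moser's modifying-terms theorem directly. Reversibility forces the modifying terms in the $u$-equation to vanish, and the twist is used to choose the torus so that the remaining $\chi$-modifying term vanishes.
\item Localize at an $\varepsilon$-dependent width $s\sim(\varepsilon/\sup|\Omega'|)^{1/2}$. The term $\Omega'(u_0)sw$ then becomes a higher-order perturbation of size about $(\varepsilon\sup|\Omega'|)^{1/2}$, comparable to $\dot w=P/s$. The Diophantine constant $\gamma_1$ must be at least of that size.
\end{enumerate}
In the second version, a condition of the form $\varepsilon\le C\tau^a$ comes from requiring that the excluded set, of measure $O(\gamma_1/\tau)$ in $u$, be smaller than $|I|$. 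It does not come from the $\tau^{-a'}$ loss you attribute it to in Step~1.
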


Take
\begin{equation}\label{eq:Lchoice}
 L>\frac{2a}{n+2}+2.
\end{equation}
By \eqref{eq:PQsmall} and \eqref{eq:twistbound},
\[
 \eps\leq CA^{-L}
 =o\left(A^{-2a/(n+2)}\right)
 =o(\tau^a),
\]
so \eqref{eq:KAMsmallness} holds for all sufficiently large $A$.
Theorem \ref{thm:KAM} therefore gives a positive-measure family of
invariant $(d+1)$-tori in every sufficiently large annulus selected in
Lemma \ref{lem:nonresonance}.

\begin{proof}[Proof of Theorem \ref{thm:main}]
Let $\mathcal T_A$ be one of the KAM tori obtained above.  In the final
coordinates it is a graph
\[
 \mathcal T_A=
 \{(u,\chi,\theta):u=u_A(\chi,\theta)\}.
\]
The transformations \eqref{eq:AA}, $\Psi^{(M)}$,
\eqref{eq:rQ}, and \eqref{eq:chi} are diffeomorphisms on their domains.
Thus their image $\widehat{\mathcal T}_A$ is an embedded invariant
codimension-one torus in
\[
 (0,\infty)\times\T^{d+1}.
\]
The complement has two components,
\begin{align*}
 \mathcal U_A^-&=
 \{(u,\chi,\theta):0<u<u_A(\chi,\theta)\},\\
 \mathcal U_A^+&=
 \{(u,\chi,\theta):u>u_A(\chi,\theta)\}.
\end{align*}
If an orbit crossed $\mathcal T_A$, uniqueness would give a first
intersection time and then force the orbit through that point to coincide
with the orbit contained in $\mathcal T_A$, a contradiction.  Hence
\begin{equation}\label{eq:barrier}
 z(0)\in\mathcal U_A^-\quad\Longrightarrow\quad
 z(t)\in\mathcal U_A^-\quad\text{for all }t
\end{equation}
in the maximal interval of existence.

Given initial data, choose $A$ so large that the initial action lies in
$\mathcal U_A^-$.  By \eqref{eq:barrier},
\[
 0<\rho(t)\leq C A .
\]
Using \eqref{eq:AA} and \eqref{eq:SCproperties},
\begin{equation}\label{eq:xybound}
 |x(t)|\leq C A^\alpha,\qquad
 |y(t)|\leq C A^\beta.
\end{equation}
The vector field in \eqref{eq:extended} is locally Lipschitz.
If a maximal solution had a finite endpoint $t_*$, then
\[
 \limsup_{t\to t_*}\bigl(|x(t)|+|y(t)|\bigr)=\infty,
\]
contradicting \eqref{eq:xybound}.  Thus the solution exists for all
$t\in\R$ and is bounded.
\end{proof}

\begin{remark}
For genuinely quasi-periodic forcing ($d>1$), there is no ordinary
time-one Poincar\'e map.  The barriers used above are codimension-one tori
of the extended flow.  In the same generality one should not claim
subharmonic solutions, because the forcing itself has no common period.
\end{remark}

\section*{Declarations}
\textbf{Conflict of interest.} The author declares no conflict of interest.

\textbf{Data availability.} No data were used in this study.

\end{document}